\documentclass[11pt]{article}

\usepackage[a4paper,vmargin={25mm,25mm},hmargin={25mm,25mm}]{geometry}
\usepackage{amsmath,amssymb,amsthm,mathtools}
\usepackage{enumitem}
\usepackage{hyperref}
\usepackage{microtype}

\hypersetup{colorlinks=true,linkcolor=blue,citecolor=blue,urlcolor=blue}

\newtheorem{theorem}{Theorem}[section]
\newtheorem{proposition}[theorem]{Proposition}
\newtheorem{lemma}[theorem]{Lemma}

\theoremstyle{definition}
\newtheorem{definition}[theorem]{Definition}
\newtheorem{remark}[theorem]{Remark}

\newcommand{\R}{\mathbb R}
\newcommand{\Z}{\mathbb Z}
\newcommand{\T}{\mathbb T}
\newcommand{\cH}{\mathcal H}
\newcommand{\cL}{\mathcal L}
\newcommand{\disc}{\operatorname{disc}}
\newcommand{\diam}{\operatorname{diam}}
\newcommand{\supp}{\operatorname{supp}}
\newcommand{\BVHK}{\operatorname{BV}_{\mathrm{HK}}}

\newcommand{\norm}[1]{\left\|#1\right\|}

\title{Polylogarithmic Full-Chord Buffon Discrepancy}
\author{Samuel Korsky}
\date{May 18, 2026}

\begin{document}
\maketitle

\begin{abstract}
\noindent
Steinerberger introduced the Buffon discrepancy problem, asking how accurately a one-dimensional set of length $L$ in a convex body can match the Crofton-predicted line-intersection counts, and proved an $O\left(L^{1/3}\right)$ upper bound via a Steinhaus longimeter construction. Using the Aistleitner--Bilyk--Nikolov arbitrary-measure star-discrepancy theorem we demonstrate the existence of full-chord constructions with discrepancy $O\left((\log L)^{3/2}\right)$ for every fixed compact convex body with finite piecewise $C^2$ boundary.  In the disk, we prove that every full-chord construction has discrepancy at least $\Omega\left(\log L\right)$, using Schmidt's two-dimensional rectangle discrepancy lower bound.
\end{abstract}

\section{Introduction}

Let $\Omega\subset\R^2$ be a compact convex body with nonempty interior.  If $S\subset\Omega$ is a finite union of $C^1$ arcs, counted without multiplicity, write
\[
        L(S)=\cH^1(S).
\]
For a line $\ell$, let $\#(\ell\cap S)$ denote the number of transverse intersections, counted with multiplicity over the arcs.  The exceptional lines which are tangent to an arc, pass through an endpoint, or contain a subarc are ignored by taking the essential supremum in line space.  We define
\[
        \disc_\Omega(S)
        :=
        \operatorname*{ess\,sup}_{\ell\in\cL}
        \left|
        \#(\ell\cap S)-
        \frac{2L(S)}{\pi|\Omega|}\cdot\cH^1(\ell\cap\Omega)
        \right|.
\]
A \emph{full chord} of $\Omega$ is a set of the form
\[
        g\cap\Omega,
\]
where $g$ is a line meeting $\Omega$.  In this note, a full-chord construction is a finite union of such full chords.

\bigskip
\noindent
This is the same geometric class used by the Steinhaus-type constructions in Steinerberger's original work~\cite{SteinerbergerBuffon} and in the randomly shifted construction of~\cite{KorskyRandomShifted}: in both cases one chooses lines in the plane and restricts them to $\Omega$, thereby producing full chords. The present paper differs in how those chords are chosen. Rather than using evenly spaced directions together with deterministic or randomly shifted one-dimensional lattices of offsets, we sample directly in the two-dimensional endpoint-pair space of chords. This viewpoint is useful because a full chord is determined by its two boundary endpoints, and a test chord intersects it exactly when those endpoints lie on opposite boundary arcs. 

\bigskip
\noindent
Thus the problem becomes a two-dimensional discrepancy problem, yielding a polylogarithmic upper bound for general convex bodies and, in the disk, a logarithmic lower bound. We note that Steinerberger's bounded-discrepancy construction for the disk uses concentric circles and is therefore not a full-chord construction.

\begin{theorem}[Polylogarithmic upper bound for full chords]\label{thm:upper}
Let $\Omega\subset\R^2$ be a compact convex body with nonempty interior and finite piecewise $C^2$ boundary.  There is a constant $C_\Omega<\infty$ such that for every sufficiently large $L$ there exists a finite union $S$ of full chords of $\Omega$ satisfying $L(S)=L$ and
\[
        \disc_\Omega(S)
        \le C_\Omega(\log L)^{3/2}.
\]
\end{theorem}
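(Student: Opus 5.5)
Parametrize $\partial\Omega$ by arclength $s\in\T_P:=\R/P\Z$, where $P$ is the perimeter. A full chord is then a pair of boundary points $(s,t)$. By the Crofton formula, the measure on line space $d\ell = dp\,d\theta$, pushed forward to endpoint pairs, is
\[
d\mu(s,t)=\frac{\sin\alpha(s,t)\sin\beta(s,t)}{|x(s)-x(t)|}\,ds\,dt,
\]
with $\alpha,\beta$ the angles the chord makes with the boundary at its endpoints. A test line $\ell$ meeting the interior has endpoints $a<b$ and cuts $\partial\Omega$ into two arcs $I=[a,b]$ and $I^c$. For a.e.\ $\ell$, the line $\ell$ crosses the full chord $(s,t)$ exactly when $s$ and $t$ lie on opposite arcs. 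So $\#(\ell\cap S)$ counts chords $(s,t)$ of $S$ lying in $(I\times I^c)\cup(I^c\times I)$.

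The model to match is the measure $\nu=\lambda\mu$, with $\lambda=2L/(\pi|\Omega|)$ up to the normalization of $\mu$. For this measure, Crofton gives $\nu\bigl((I\times I^c)\cup(I^c\times I)\bigr)=\lambda\,\cH^1(\ell\cap\Omega)$: integrating over all lines meeting $\ell\cap\Omega$ gives twice its length. The discrepancy therefore equals the discrepancy of the finite point set $\{(s_j,t_j)\}$ against the continuous measure $\nu$ on the two-dimensional family of "cross" sets $I\times I^c$. After cutting the torus at $0$, each such set is a union of at most $O(1)$ axis-parallel rectangles in $[0,P)^2$. Each of these rectangles is a signed combination of $O(1)$ anchored boxes $[0,x)\times[0,y)$. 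So the task reduces to star discrepancy with respect to the measure $\nu$, normalized to a probability measure.

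The next step handles the constraint $L(S)=L$ and the conversion to an unweighted count. I would first pick the number of chords $N\asymp L$ with total mass $\nu([0,P)^2)=N$ up to an $O(1)$ error, using the average chord length. Then I would apply the Aistleitner--Bilyk--Nikolov theorem: for every probability measure on $[0,1]^d$ and every $N$ there are points whose star discrepancy is $O\bigl((\log N)^{(d-1)/2+1}/N\bigr)$, and for $d=2$ this is $(\log N)^{3/2}/N$. Scaling by $N$ bounds the count error over every anchored box by $O((\log N)^{3/2})$, and this transfers to all cross sets with constant overhead.

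The difficulty I anticipate is that the total length $\sum_j |x(s_j)-x(t_j)|$ is only approximately $L$. The sampled chords carry lengths, and the mean of the length function $f(s,t)=|x(s)-x(t)|$ under $\nu$ is fixed. By Koksma--Hlawka for general measures, the error in $\sum_j f(s_j,t_j)$ is at most $V_{HK}(f)$ times $N$ times the star discrepancy. Piecewise $C^2$ boundary makes $f$ of bounded Hardy--Krause variation, with finitely many kinks at corners, so the error is $O((\log N)^{3/2})$. I would correct the length by adding or removing $O((\log L)^{3/2})$ chords, or by perturbing one chord's length continuously (chords range over $[0,\diam\Omega]$). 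Each added chord changes every count by at most $1$, which keeps the bound. Finally, the mismatch between $\lambda$ and $N/\nu$-mass contributes only $O((\log L)^{3/2})\cdot\cH^1(\ell\cap\Omega)/L$ per line, which is negligible, giving $\disc_\Omega(S)\le C_\Omega(\log L)^{3/2}$.

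The step I expect to be the main obstacle is controlling the Hardy--Krause variation of $f$ near the diagonal and near corners, together with checking that $\mu$ has no atoms problematic for the ABN theorem. I would handle this by splitting the boundary into finitely many $C^2$ pieces.
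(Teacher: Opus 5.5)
Your proposal is correct and follows the paper's proof essentially step for step. It uses endpoint-pair space with the pushed-forward line measure, crossing sets $(I\times I^c)\cup(I^c\times I)$ as $O(1)$ rectangles, the Aistleitner--Bilyk--Nikolov bound in dimension two, Koksma--Hlawka plus bounded Hardy--Krause variation of the chord-length function to control the total length, and a final exact-length correction by $O((\log L)^{3/2})$ extra chords, each changing every count by at most one.

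The paper makes a few points more explicit. The ABN points must be chosen in $\supp\mu$ and off the exceptional null set, so that they are genuine, distinct full chords; this, not atoms, is the real issue behind your worry. The paper also takes $N$ slightly too small so that the length deficit is nonnegative and only additions are needed. Finally, the general ABN exponent is $d-\tfrac12$, which agrees with your $(d-1)/2+1$ only at $d=2$.
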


\begin{theorem}[Full-chord lower bound in the disk]\label{thm:lower}
There is an absolute constant $c>0$ such that every finite union $S$ of full chords of the unit disk $B=B(0,1)$ satisfies, for all sufficiently large $L(S)$,
\[
        \disc_B(S)\ge c\log L(S).
\]
\end{theorem}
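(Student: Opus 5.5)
We prove Theorem~\ref{thm:lower} by reducing the full-chord discrepancy in the disk to a two-dimensional rectangle (or box) discrepancy problem on the torus of endpoint pairs. We then invoke Schmidt's lower bound, which says that any signed combination of $N$ point masses minus a multiple of Lebesgue measure has rectangle discrepancy at least $c\log N$.

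\textbf{Parametrization.} Parametrize a full chord of $B$ by its two endpoints $e^{i\alpha},e^{i\beta}$, i.e. by an unordered pair $\{\alpha,\beta\}\subset\T=\R/2\pi\Z$. A union $S$ of $N$ chords then becomes a point multiset $P=\{(\alpha_j,\beta_j)\}$ in $\T^2$, which we symmetrize under $(\alpha,\beta)\mapsto(\beta,\alpha)$. A test line $\ell$ meets $B$ in a chord with endpoints $\{a,b\}$. For almost every $\ell$, the two chords cross exactly when one endpoint of the $j$-th chord lies in the arc $(a,b)$ and the other lies in the complementary arc $(b,a)$. Hence
\[
\#(\ell\cap S)=\#\{j:(\alpha_j,\beta_j)\in I\times I^c\},
\]
where $I=(a,b)$, a "rectangle" of product type in $\T^2$.

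\textbf{Reduction to rectangle discrepancy.} Set $\rho=2L(S)/(\pi|B|)=2L/\pi^2$. Crofton's formula in the disk says that the measure of chords crossing a given chord is proportional to its length. So the predicted count $\rho\,\cH^1(\ell\cap B)$ equals $\mu(I\times I^c)$ for the measure $\mu$ on $\T^2$ with density proportional to $|\sin((\alpha-\beta)/2)|\cdot$(Jacobian). Concretely, the invariant line measure $dp\,d\theta$ pulls back to $\tfrac14|\sin\tfrac{\alpha-\beta}{2}|\,d\alpha\,d\beta$, and the total mass is normalized by $\rho$. The length $L$ is itself $\sum_j 2|\sin((\alpha_j-\beta_j)/2)|$, so $N\gtrsim L$ is not needed, but $N\ge L/2$ holds since each chord has length at most $2$. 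Thus $\disc_B(S)\ge\sup_{I}|P(I\times I^c)-\mu(I\times I^c)|$ over arcs $I$.

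Now localize. Choose two disjoint arcs $J_1=[0,\delta]$ and $J_2=[\pi,\pi+\delta]$, on which the density is bounded above and below by absolute constants. For $x\in J_1$ and $y\in J_2$, the set $[0,x]\times[\pi,y]$ is an inclusion–exclusion combination of a bounded number of sets of the form $I\times I^c$, using arcs with endpoints among $0,x,\pi,y$. For example, with $I=(0,y)$ versus $(x,y)$ versus $(0,\pi)$ versus $(x,\pi)$, the four terms $\pm$ cancel all contributions except pairs with one coordinate in $[0,x]$ and the other in $[\pi,y]$; chords with both ends near each other contribute symmetrically and cancel. It follows that the local rectangle discrepancy of $P$ against $\mu$ on $J_1\times J_2$ is at most $4\disc_B(S)$.

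\textbf{Applying Schmidt and the main obstacle.} After rescaling, the density of $\mu$ on $J_1\times J_2$ is smooth and nonvanishing, so it can be straightened to Lebesgue measure by a product change of variables $x\mapsto F(x)$, $y\mapsto G(y)$. This is possible only if the density is a product; since it is not exactly a product, we instead apply Schmidt's theorem in the generalized form valid for arbitrary measures with bounded density, or equivalently absorb the error. The main obstacle is that Schmidt's bound is stated for $N$ points with equal weights against uniform measure, whereas here the point count in $J_1\times J_2$ is only $\asymp\mu(J_1\times J_2)\asymp L$, up to the discrepancy itself. If that count deviates by more than $\log L$, we are already done. Otherwise we are comparing $M\asymp L$ points against a smooth density, and Schmidt's $\log M$ lower bound for two-dimensional anchored rectangles, whose proof via Roth/Halász orthogonal functions works for any bounded-density reference measure, gives discrepancy at least $c\log L$.
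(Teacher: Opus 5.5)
Your reduction matches the paper's up to the last step. The shared steps are: endpoint-pair coordinates; the four-arc inclusion--exclusion with arcs $(a,c),(b,d),(b,c),(a,d)$; localization to a box away from the diagonal, where the density $\frac{L}{2\pi^2}\sin\frac{y-x}{2}$ is pinched between constants; and disposal of the case where the total count in the box is off by more than $\log L$. In your symmetrized picture the inclusion--exclusion combination equals $\mathbf 1_{A\times J}+\mathbf 1_{J\times A}$, so the sharp constant is $2\disc_B(S)$, as in the paper's identity; your $4\disc_B(S)$ is still a valid bound.

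The genuine difference is how you handle the non-constant density. The paper uses Schmidt's theorem as a black box. It shrinks to a square of side $r=A(\log M/M)^{1/3}$. There, replacing the $C^1$ density by its average costs only $O(Mr^3)=O(A^3\log M)$, while the square still contains $\asymp Mr^2$ points, enough for Schmidt to give $\gtrsim\log M$.

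You instead re-run Hal\'asz's argument for the weighted discrepancy function $D(x)=\#(P\cap[0,x))-M\int_{[0,x)}\rho$. That works, but you should state the computation it rests on. For an empty dyadic rectangle $R=I\times J$, two integrations by parts give $\langle D,h_R\rangle=-M\int_R\rho\,H_IH_J$, where $H_I,H_J$ are the tent primitives of the Haar functions. So $H_IH_J$ has one sign on $R$ and $\int_R|H_IH_J|=|R|^2/16$. Hence the coefficients have a fixed sign and size between $c_0M|R|^2/16$ and $C_0M|R|^2/16$. The Riesz product $\prod_k(1+\gamma f_k)$ and its $L^1$ bound involve only Lebesgue measure. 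So once $N\asymp M$, the linear term is $\gtrsim\gamma n$ and the higher-order terms are $O(\gamma^2 n)$. This uses $\rho\ge c_0$ essentially, so ``any bounded-density reference measure'' should read ``any density bounded above and below.''

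Your route needs only measurable two-sided bounds on $\rho$ and no auxiliary shrinking scale, at the cost of opening Schmidt's proof. The paper's route needs $C^1$ regularity of $\rho$ but uses Schmidt verbatim.

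Finally, passing from the essential supremum to a supremum over all arcs requires two things you skip: restricting to arcs whose endpoints avoid the finitely many chord endpoints, and a local-constancy/continuity argument. The paper spells both out.
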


\begin{remark}
The exponents in Theorems~\ref{thm:upper} and~\ref{thm:lower} are unlikely to be sharp.  The purpose of the note is to show that, in the full-chord model, the correct scale is at worst polylogarithmic and at least logarithmic.  This sharply contrasts with constructions based on evenly spaced directions and shifted line grids, whose natural errors are polynomial in $L$.
\end{remark}

\section{Discrepancy Inputs}

The upper bound uses a theorem of Aistleitner--Bilyk--Nikolov~\cite{AistleitnerBilykNikolov}.  We state the support form used below and spell out the harmless support/null-set reduction, since the geometric construction requires the selected points to be genuine endpoint pairs.

\begin{theorem}[Aistleitner--Bilyk--Nikolov, support form]\label{thm:ABN}
Let $\mu$ be a non-atomic Borel probability measure on $[0,1]^2$, and let $Z\subset[0,1]^2$ be a Borel set with $\mu(Z)=0$.  For every $N\ge2$, there are distinct points
\[
        z_1,\dots,z_N\in \supp\mu\setminus Z
\]
such that
\[
        \sup_R\left|\frac1N\sum_{i=1}^N\mathbf 1_R(z_i)-\mu(R)\right|
        \le C\cdot\frac{(\log N)^{3/2}}{N},
\]
where the supremum is over all anchored rectangles $R=[0,u)\times[0,v)$ and $C$ is an absolute constant.  Consequently the same estimate, with another absolute constant, holds for all axis-parallel rectangles.
\end{theorem}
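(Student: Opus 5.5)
We derive the support form from the original Aistleitner--Bilyk--Nikolov theorem. That theorem states: for every Borel probability measure $\nu$ on $[0,1]^d$ and every $N$, there are points $z_1,\dots,z_N\in[0,1]^d$ whose star discrepancy relative to $\nu$ is at most $C_d(\log N)^{d-1/2}/N$. For $d=2$ this is $(\log N)^{3/2}/N$. We assume this result. The work is to upgrade the points so that they are distinct, lie in $\supp\mu$, and avoid the null set $Z$. We also need the passage from anchored to general rectangles.

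The first step is a perturbation argument. Fix points $z_1,\dots,z_N$ given by ABN for $\mu$, with anchored discrepancy $D\le C(\log N)^{3/2}/N$. Discrepancy against anchored boxes is not continuous in the points, so moving the points is dangerous. To avoid this, we do not move the points but instead change the measure.

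1. Since $\mu$ is non-atomic, each marginal of $\mu$ has at most countably many atoms. Hence there is a countable exceptional set of coordinate lines that we can avoid.
2. Let $G=\supp\mu\setminus Z$. Then $\mu(G)=1$, so $G$ is dense in $\supp\mu$ up to a null set.
3. Apply ABN to a discrete approximation instead. Partition $[0,1]^2$ by a fine grid whose lines avoid the atoms of the marginals. In each grid cell $Q$ with $\mu(Q)>0$, choose a point of $G\cap Q$ (this is possible because $\mu(G\cap Q)>0$). Let $\nu_M$ be the atomic measure placing mass $\mu(Q)$ at the chosen point of $Q$.
4. Apply ABN to $\nu_M$, after first perturbing each atom into a tiny non-atomic cloud if needed. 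The cleaner route is to note that ABN holds for arbitrary measures, so its output points can be taken inside $\supp\nu_M\subset G$.

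I expect this last claim to be the main obstacle. ABN's points arise from a Beck/Banaszczyk-type balancing applied to a fine atomic approximation of the measure, and those points are atoms of that approximation. I would therefore reprove it in outline: approximate $\mu$ by $\nu_M$ with atoms in $G$, run the balancing on the atoms, and check that the output is a multiset of atoms.

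To get distinctness, split each atom into $N$ nearby points of $G$ inside its cell. This is possible because $\mu(G\cap Q)>0$ and $\mu$ is non-atomic, so $G\cap Q$ is infinite. Each chosen point lies within the cell, so every anchored box distinguishes the new points from the atoms only through cells crossed by the box boundary.

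The error from replacing $\mu$ by $\nu_M$ on anchored boxes comes from the cells the boundary crosses. It is bounded by the $\mu$-mass of $O(M)$ boundary cells in a grid row or column. Choosing grid lines at $\mu$-quantiles of the marginals makes each strip have mass $1/M$, so the error is $O(1/M)\cdot$(number of strips hit), which is $O(1/M)$ per side. Taking $M\gg N$ makes this $o((\log N)^{3/2}/N)$.

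Finally, a general axis-parallel rectangle $[a,b)\times[c,d)$ can be written by inclusion--exclusion as a signed sum of four anchored rectangles. This gives the stated bound with constant $4C$. Closed or half-open boundary variants are handled by monotone limits, using the fact that $\mu$ puts zero mass on each line that avoids the countably many marginal atoms.
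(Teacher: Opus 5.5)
Your reduction is the same as the paper's. The Aistleitner--Bilyk--Nikolov points come from iterated balanced halving of a fixed discrete approximation of $\mu$, so they form a sub-multiset of its atoms. Placing those atoms in $\supp\mu\setminus Z$, using non-atomicity for distinctness and inclusion--exclusion for general rectangles then gives the support form; the paper asserts exactly this about the cell representatives in the ABN proof.

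\textbf{The gap is in your quantitative control of the discretization.} Non-atomicity of $\mu$ does not prevent its marginals from having atoms. Take $\mu=\tfrac12\sigma+\tfrac12\,dx\,dy$, where $\sigma$ is the normalized length measure on $\{1/2\}\times[0,1]$; its $x$-marginal has an atom of mass $1/2$ at $1/2$. Then strips of $\mu$-mass $1/M$ do not exist. With your grid lines kept off $x=1/2$ (your step 1), the column $[a,b)$ containing $1/2$ has mass at least $1/2$ for every $M$. Write $x(p)$ for the first coordinate. If the representatives $p_Q\in G\cap Q$ of the cells in that column all satisfy $x(p_Q)>1/2$, which nothing in your choice prevents, take $R=[0,u)\times[0,1)$ with $1/2<u<\min_Q x(p_Q)$. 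This box has $\mu(R)-\nu_M(R)\ge\tfrac12$. So the claimed bound $\sup_R|\mu(R)-\nu_M(R)|=O(1/M)$ is false in general. Your splitting step relies on the same strip bound. Its error is the number of output points in the cut cells, which is at most $N\nu_M(T_1\cup T_2)+O(ND)$ for the cut column strip $T_1$ and row strip $T_2$; you should write this count out explicitly.

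\textbf{Two ways to repair it.} The first keeps your grid. Isolate each of the finitely many marginal atoms of mass $\ge 1/M$ as a degenerate strip $\{x_0\}\times[0,1]$ (resp.\ $[0,1]\times\{y_0\}$), which an anchored box never cuts in that coordinate. Split the remaining marginal mass into $O(M)$ intervals of mass at most $2/M$. The boundary error is then $O(1/M)$, and the splitting count is $O(N/M+ND)=O((\log N)^{3/2})$.

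The second is simpler: take the initial discrete set to be $M$ i.i.d.\ samples from $\mu$.
\begin{itemize}
\item The Vapnik--Chervonenkis bound for anchored boxes gives expected star discrepancy $O(\sqrt{\log M/M})$ for every Borel probability measure, with no condition on the marginals.
\item The samples lie in $\supp\mu\setminus Z$ and are pairwise distinct almost surely.
\item Halving only selects subsets, so the $N$ output points inherit both properties.
\end{itemize}
Neither the grid nor the splitting step is then needed.
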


\begin{proof}
The published theorem of Aistleitner--Bilyk--Nikolov gives the same estimate for arbitrary normalized Borel measures on $[0,1]^d$, with exponent $(\log N)^{d-1/2}/N$; for $d=2$ this is $(\log N)^{3/2}/N$.  The only additional point is the choice of representatives.  In the transference/rounding proof, the final points are representatives of positive-$\mu$ cells in a finite Borel partition.  Since $\mu$ is non-atomic and $\mu(Z)=0$, each such cell may be split, if necessary, and its representative may be chosen in $\supp\mu\setminus Z$; every positive-$\mu$ Borel set meets this set.  The representatives can also be chosen distinct, again by non-atomicity.  This does not change the discrepancy estimate, because the proof depends only on the cell masses and on the representative-selection freedom in the final step.  Inclusion-exclusion gives the stated bound for arbitrary axis-parallel rectangles.
\end{proof}

\begin{remark}
The non-atomic and null-set clauses are used only to guarantee that the selected endpoint pairs are genuine, distinct full chords in general position.
\end{remark}

\smallskip
\noindent
The lower bound uses Schmidt's sharp two-dimensional lower bound for rectangle discrepancy.  We state and prove the weighted version needed below by localizing to a small square and applying the unweighted theorem.

\begin{theorem}[Schmidt rectangle lower bound]\label{thm:schmidt-rect}
There is an absolute constant $c_S>0$ such that for every axis-parallel square $Q\subset\R^2$ and every finite point set $P\subset Q$ with $N=\#P\ge2$,
\[
        \sup_{R\subset Q}
        \left|
        \#(P\cap R)-N\cdot\frac{|R|}{|Q|}
        \right|
        \ge c_S\log N,
\]
where the supremum is over all axis-parallel rectangles $R\subset Q$.
\end{theorem}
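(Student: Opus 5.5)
This is Schmidt's theorem transported to a general square by an affine change of coordinates. The quantity being bounded is invariant under axis-parallel affine maps, so it suffices to treat the unit square, where the classical result of Schmidt applies.

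\emph{Reduction to the unit square.} Write $Q=a+s[0,1]^2$ with $a\in\R^2$ and $s>0$. The map $\phi(x)=(x-a)/s$ carries $Q$ onto $[0,1]^2$. It sends axis-parallel rectangles $R\subset Q$ bijectively onto axis-parallel rectangles $\phi(R)\subset[0,1]^2$, and it satisfies $|\phi(R)|=|R|/|Q|$. It also preserves point counts, so $\#(P\cap R)=\#(\phi(P)\cap\phi(R))$. Hence the left-hand side of the theorem equals
\[
\sup_{R'\subset[0,1]^2}\bigl|\#(P'\cap R')-N|R'|\bigr|, \qquad P'=\phi(P),
\]
and it suffices to prove the statement for $Q=[0,1]^2$.

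\emph{Schmidt's theorem.} Schmidt (1972) proved that every $N$-point set in $[0,1]^2$ with $N\ge2$ has anchored-box discrepancy
\[
\sup_{u,v}\bigl|\#(P'\cap[0,u)\times[0,v))-Nuv\bigr|\ge c\log N .
\]
Anchored boxes are among the admissible rectangles, so the supremum over all $R'\subset[0,1]^2$ is at least as large. There is a small boundary issue: anchored boxes are half-open and may have $u=1$ or $v=1$, whereas the theorem asks for rectangles contained in $Q$. This is harmless. A half-open box $[0,u)\times[0,v)$ is a monotone limit of closed boxes $[0,u-\epsilon]\times[0,v-\epsilon]\subset Q$; along this limit the counts converge, since there are finitely many points, and the areas converge. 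The supremum over closed rectangles inside $Q$ therefore dominates the anchored supremum. Points lying on the top or right edge are handled the same way, using closed boxes that reach the edge.

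\emph{Multiset caveat.} If $P$ is allowed to be a multiset, so that $N$ counts multiplicity, Schmidt's bound still holds, because the proof of Schmidt's theorem (for example Halász's orthogonal-function argument) works verbatim for point sequences with repetitions. The statement as written uses a set, so this case does not arise.

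\emph{Where the difficulty lies.} Nothing here is novel: it is Schmidt's theorem plus a rescaling. The only step needing care is justifying that Schmidt's half-open anchored formulation transfers to rectangles contained in the closed square $Q$, which the limiting argument above settles. The constant $c_S$ can be taken to be Schmidt's absolute constant.
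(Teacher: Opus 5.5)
Your proposal is correct and follows the same route as the paper, which states this result as Schmidt's classical theorem and gives no further proof; your affine rescaling to $[0,1]^2$ and the limiting argument relating half-open anchored boxes to rectangles inside $Q$ just spell out normalizations the paper leaves implicit. Your sentence about points on the top or right edge is terse but harmless, since Schmidt's bound holds for arbitrary $N$-point sets in the closed square with closed corners $[0,u]\times[0,v]$, and these corners are themselves rectangles in $Q$.
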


\begin{theorem}[Weighted Schmidt rectangle lower bound]\label{thm:weighted-schmidt}
Let $Q\subset\R^2$ be a fixed rectangle and let $\rho\in C^1(Q)$ satisfy
\[
        0<c_0\le \rho\le C_0<\infty,
        \qquad
        \norm{\rho}_{C^1(Q)}\le C_0.
\]
Then there are constants $c,M_0>0$, depending only on $Q,c_0,C_0$, with the following property. For every $M\ge M_0$ and every finite point set $P\subset Q$,
\[
        \sup_{R\subset Q}
        \left|
        \#(P\cap R)-M\int_R\rho(x,y)\,dx\,dy
        \right|
        \ge c\log M,
\]
where the supremum is over all axis-parallel rectangles $R\subset Q$.
\end{theorem}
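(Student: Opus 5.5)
The plan is to localize to a square $Q'\subset Q$ so small that $\rho$ is essentially constant on it, but still large enough to contain a polynomial-in-$M$ number of points. On $Q'$ I compare the weighted counting problem with the unweighted one and then invoke Theorem~\ref{thm:schmidt-rect}. Write
\[
D:=\sup_{R\subset Q}\left|\#(P\cap R)-M\int_R\rho\right|.
\]
We may assume $D\le \log M$, since otherwise there is nothing to prove as long as $c\le 1$.

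\textbf{Step 1 (choice of scale).} Set $h:=M^{-1/3}$ and let $Q'\subset Q$ be any axis-parallel square of side $h$. Such a square exists once $M\ge M_0$, where $M_0$ depends on the side lengths of $Q$. Since $\norm{\rho}_{C^1}\le C_0$, the oscillation of $\rho$ on $Q'$ is at most $\sqrt2\,C_0h$. Let $\bar\rho:=|Q'|^{-1}\int_{Q'}\rho$ and $N':=\#(P\cap Q')$. Applying the definition of $D$ to $R=Q'$ gives
\[
|N'-M\bar\rho h^2|\le D.
\]
Hence $N'\ge c_0M^{1/3}-\log M\ge \tfrac12 c_0 M^{1/3}$ for $M$ large. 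In particular $N'\ge 2$ and $\log N'\ge \tfrac14\log M$, after enlarging $M_0$.

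\textbf{Step 2 (comparison on $Q'$).} Fix a rectangle $R\subset Q'$. Then
\[
\Bigl|\#(P\cap R)-N'\tfrac{|R|}{|Q'|}\Bigr|
\le \Bigl|\#(P\cap R)-M\!\int_R\rho\Bigr|
+M\Bigl|\int_R(\rho-\bar\rho)\Bigr|
+\bigl|M\bar\rho h^2-N'\bigr|\tfrac{|R|}{|Q'|}.
\]
The three terms are bounded as follows:
\begin{itemize}[nosep]
\item the first is at most $D$, since $R\subset Q$ is admissible;
\item the second is at most $M|R|\sqrt2 C_0h\le \sqrt2C_0Mh^3=\sqrt2C_0$, by the oscillation bound and $|R|\le h^2$;
\item the third is at most $D$, by Step 1 and $|R|\le|Q'|$.
\end{itemize}
Hence the left side is at most $2D+\sqrt2C_0$.

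\textbf{Step 3 (apply Schmidt).} Apply Theorem~\ref{thm:schmidt-rect} to the square $Q'$ and the point set $P\cap Q'$, which has $N'\ge2$ points. This gives a rectangle $R\subset Q'$ for which the left side in Step 2 is at least $c_S\log N'\ge \tfrac{c_S}{4}\log M$. Therefore
\[
2D+\sqrt2 C_0\ge \tfrac{c_S}{4}\log M,
\]
so $D\ge \tfrac{c_S}{16}\log M$ once $M$ is large enough to absorb the additive constant $\sqrt2C_0$. The claim then holds with $c=\min(1,c_S/16)$ and a suitable $M_0(Q,c_0,C_0)$.

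The main point requiring care is the balance in Step 1. The smoothing error $Mh^3$ must be $O(1)$, while the number of points $\asymp Mh^2$ must still grow polynomially in $M$, so that $\log N'\gtrsim\log M$. The choice $h=M^{-1/3}$ achieves both. Everything else is bookkeeping, including the preliminary reduction to $D\le\log M$, which is what guarantees $N'\ge2$.
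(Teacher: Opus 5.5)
Your proof is correct and follows the paper's argument: localize to a small square, control its point count by testing the square itself, compare the uniform target with the weighted one via the $C^1$ oscillation bound, and apply Schmidt's theorem there. The only difference is the scale. The paper takes side length $A(\log M/M)^{1/3}$ with $A$ small, so the smoothing error is at most $(c_S/16)\log M$; your choice $h=M^{-1/3}$ makes that error $O(1)$ and absorbs it for large $M$, which is slightly cleaner.
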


\begin{proof}
Let
\[
        \Delta:=\sup_{R\subset Q}
        \left|
        \#(P\cap R)-M\int_R\rho
        \right|.
\]
We prove $\Delta\ge c\log M$ for large $M$.

\bigskip
\noindent
Choose a point $x_0$ in the interior of $Q$.  For $M$ large, let $Q_M\subset Q$ be the axis-parallel square centered at $x_0$ with side length
\[
        r=A\left(\frac{\log M}{M}\right)^{1/3},
\]
where $A>0$ will be chosen sufficiently small depending only on $c_S$ and $C_0$.  Put
\[
        N_M:=\#(P\cap Q_M).
\]
Since $Q_M$ is a rectangle,
\begin{equation}\label{eq:local-mass-control-schmidt}
        \left|N_M-M\int_{Q_M}\rho\right|\le \Delta.
\end{equation}
If $\Delta\ge (c_S/16)\log M$, we are done.  Hence assume the opposite.  Since $\rho\ge c_0$,
\[
        M\int_{Q_M}\rho
        \ge c_0Mr^2
        =c_0A^2M^{1/3}(\log M)^{2/3}.
\]
The assumed bound on $\Delta$ is negligible compared to this quantity for large $M$.  Therefore
\[
        N_M\asymp Mr^2,
        \qquad
        \log N_M\ge \frac14\cdot\log M
\]
for all sufficiently large $M$.

\bigskip
\noindent
Apply Theorem~\ref{thm:schmidt-rect} inside the square $Q_M$ to the point set $P\cap Q_M$.  There exists a rectangle $R\subset Q_M$ such that
\begin{equation}\label{eq:local-schmidt}
        \left|
        \#(P\cap R)-N_M\cdot\frac{|R|}{|Q_M|}
        \right|
        \ge \frac{c_S}{4}\cdot\log M.
\end{equation}
We now compare the constant-density target $N_M|R|/|Q_M|$ to the weighted target $M\int_R\rho$.  First, by \eqref{eq:local-mass-control-schmidt},
\[
        \left|
        \frac{|R|}{|Q_M|}\cdot N_M
        -
        \frac{|R|}{|Q_M|}\cdot M\int_{Q_M}\rho
        \right|
        \le \Delta.
\]
Second, if
\[
        \bar\rho_{Q_M}:=\frac1{|Q_M|}\int_{Q_M}\rho,
\]
then the $C^1$ bound on $\rho$ implies that $\rho$ varies by at most $O(C_0r)$ on $Q_M$. Therefore
\[
\begin{aligned}
        \left|
        M\bar\rho_{Q_M}|R|-M\int_R\rho
        \right|
        &\le M\int_R |\rho-\bar\rho_{Q_M}| \\
        &\le C C_0 M r |R| \\
        &\le C C_0 M r^3.
\end{aligned}
\]
By the choice of $r$, this is $C C_0A^3\log M$.  Choose $A$ sufficiently small that $C C_0A^3\le c_S/16$.  Combining with \eqref{eq:local-schmidt},
\[
        \Delta\ge \frac{c_S}{4}\log M-\Delta-\frac{c_S}{16}\log M.
\]
Hence
\[
        2\Delta\ge \frac{3c_S}{16}\log M,
\]
which proves the theorem, after adjusting constants.
\end{proof}

\smallskip
\noindent
We also use a Koksma--Hlawka type consequence of rectangle discrepancy.

\begin{definition}[Hardy--Krause variation in dimension two]
For a $C^2$ function $f$ on $[0,1]^2$, define
\[
\begin{aligned}
        \norm{f}_{\BVHK}
        :=&\ |f(1,1)|
        +\int_0^1 |\partial_x f(x,1)|\,dx
        +\int_0^1 |\partial_y f(1,y)|\,dy \\
        &+\int_0^1\int_0^1 |\partial_{xy} f(x,y)|\,dx\,dy.
\end{aligned}
\]
For a general function of bounded Hardy--Krause variation, the same expression is interpreted distributionally.
\end{definition}

\begin{lemma}[Rectangle discrepancy controls bounded-variation integration]\label{lem:KH}
Let $\mu$ be a finite Borel measure on $[0,1]^2$, let $z_1,\dots,z_N\in[0,1]^2$, and put
\[
        \sigma=\sum_{i=1}^N\delta_{z_i}-N\mu.
\]
Assume that
\[
        \sup_R |\sigma(R)|\le \Delta,
\]
where the supremum is over axis-parallel rectangles.  If $f$ has finite Hardy--Krause variation on $[0,1]^2$, then
\[
        \left|\sum_{i=1}^N f(z_i)-N\int f\,d\mu\right|
        \le \norm{f}_{\BVHK}\Delta.
\]
\end{lemma}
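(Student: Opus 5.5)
We use the standard Koksma--Hlawka argument: represent $f$ through its values on the upper-right boundary and its mixed derivative, then integrate $\sigma$ against that representation. First we treat $f\in C^2$, where the variation is given by the explicit formula in the definition.

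For $z=(x,y)\in[0,1]^2$, the fundamental theorem of calculus (anchored at the corner $(1,1)$) gives
\[
        f(x,y)=f(1,1)-\int_x^1\partial_x f(s,1)\,ds-\int_y^1\partial_y f(1,t)\,dt+\int_x^1\int_y^1\partial_{xy}f(s,t)\,dt\,ds .
\]
We rewrite each term with indicators: $\int_x^1 g(s)\,ds=\int_0^1 g(s)\mathbf 1[x\le s]\,ds$, and similarly in $t$ and in the double integral. Integrating $f$ against the signed measure $\sigma$ and applying Fubini, which is legitimate because $\sigma$ has finite total variation and the integrands are bounded, we get
\[
        \int f\,d\sigma=f(1,1)\sigma([0,1]^2)-\int_0^1\partial_xf(s,1)\,\sigma([0,s]\times[0,1])\,ds-\int_0^1\partial_yf(1,t)\,\sigma([0,1]\times[0,t])\,dt+\int_0^1\!\!\int_0^1\partial_{xy}f(s,t)\,\sigma([0,s]\times[0,t])\,ds\,dt .
\]
Each $\sigma$-value here is the mass of a closed axis-parallel rectangle, so by hypothesis it is bounded by $\Delta$ in absolute value. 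Taking absolute values term by term yields exactly $\norm{f}_{\BVHK}\Delta$. Since $\int f\,d\sigma=\sum f(z_i)-N\int f\,d\mu$, this proves the lemma for $C^2$ functions.

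\emph{Open versus closed rectangles.} The hypothesis does not say whether "axis-parallel rectangles" are open, closed or half-open. If only half-open boxes are admitted, we approximate: $[0,s]$ is the decreasing intersection of the sets $[0,s+1/k)$, so continuity of the finite measure $\sigma$ from above transfers the bound $\Delta$ to closed boxes. Boxes that touch the edge $x=1$ or $y=1$ are handled the same way, or by writing $[0,1]$ as a decreasing limit of boxes after extending $\mu$ trivially.

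\emph{General $f$.} For $f$ of finite Hardy--Krause variation, the distributional reading of the definition makes $\partial_{xy}f$ and the boundary derivatives finite signed measures. The same identity then holds with these measures in place of the densities, provided $f$ is normalized to be right-continuous in the appropriate sense. The main obstacle is justifying that the Fubini step and the identity survive in this setting, including point masses of $\sigma$ on the jump sets of $f$. The cleanest route is to apply the identity directly with measures, since it is then just the Stieltjes integration-by-parts formula, which holds for functions of bounded Hardy--Krause variation. Only the $C^2$ case is needed for the applications, so the argument can be restricted to that case if needed.
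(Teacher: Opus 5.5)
Your argument is the paper's argument: the paper quotes the two-dimensional integration-by-parts identity with $S(x,y)=\sigma([0,x]\times[0,y])$, which you derive explicitly from the fundamental theorem of calculus anchored at $(1,1)$ plus Fubini. For general $f$, both you and the paper pass to the measure-valued (Stieltjes) form of the same identity---and your caveat that $f$ must be normalized pointwise there is a real point the paper glosses over, since a purely distributional variation cannot see the values $f(z_i)$.

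One correction: the general case is not dispensable. The paper applies this lemma to the chord-length function $w(s,t)=|\Gamma(s)-\Gamma(t)|$, which is not $C^2$ because its mixed derivative has a singular part on the diagonal, so falling back to the $C^2$ case alone would not suffice.
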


\begin{proof}
First suppose $f\in C^2([0,1]^2)$.  For $x,y\in[0,1]$, put
\[
        S(x,y)=\sigma([0,x]\times[0,y]).
\]
The rectangle discrepancy assumption gives $|S(x,y)|\le \Delta$.  The two-dimensional integration-by-parts formula gives
\[
\begin{aligned}
        \int_{[0,1]^2} f\,d\sigma
        &= f(1,1)S(1,1)
        -\int_0^1 \partial_x f(x,1)S(x,1)\,dx \\
        &\quad -\int_0^1 \partial_y f(1,y)S(1,y)\,dy
        +\int_0^1\int_0^1 \partial_{xy}f(x,y)S(x,y)\,dx\,dy.
\end{aligned}
\]
Taking absolute values proves the estimate for smooth $f$.  For a general function of bounded Hardy--Krause variation, the same identity holds distributionally, replacing the derivative terms by the corresponding finite signed variation measures; alternatively one obtains the result by strict approximation in the $BV_{HK}$ topology.
\end{proof}

\section{Endpoint-Pair Space}

Let $\Omega\subset\R^2$ be a compact convex body with nonempty interior and finite piecewise $C^2$ boundary.  Choose a positively oriented arclength parametrization
\[
        \Gamma:\T\to\partial\Omega,
\]
where $\T=\R/\mathcal P\Z$ and $\mathcal P=\cH^1(\partial\Omega)$.
For notational convenience we identify $\T$ with $[0,1)$ by rescaling arclength.

\bigskip
\noindent
For almost every line $g$ meeting $\Omega$, the intersection $g\cap\partial\Omega$ consists of two distinct points.  Choose one of the two possible endpoint orderings in a measurable way and write
\[
        E(g)=(s(g),t(g))\in\T^2.
\]
Because all sets used below are symmetric under swapping the two endpoints, the choice of ordering is immaterial.

\bigskip
\noindent
Let $d\lambda(g)$ denote the usual motion-invariant measure on unoriented lines, normalized so that for every line segment $J$ of length $h$,
\[
        \lambda\{g:g\cap J\ne\varnothing\}=2h.
\]
Put
\[
        \Lambda_\Omega:=\lambda\{g:g\cap\Omega\ne\varnothing\}<\infty,
\]
and define the probability measure $\mu_\Omega$ on $\T^2$ by
\[
        \mu_\Omega:=\frac1{\Lambda_\Omega}\cdot E_\#\bigl(\lambda|_{\{g:g\cap\Omega\ne\varnothing\}}\bigr).
\]
The measure $\mu_\Omega$ is non-atomic.  Let $K_\Omega=\supp\mu_\Omega$.  Let $Z_\Omega\subset K_\Omega$ be the set of endpoint pairs for which the associated line is tangent, lies in a boundary segment, or has an endpoint at one of the finitely many nonsmooth or coordinate-cut parameters.  Then $\mu_\Omega(Z_\Omega)=0$, and every point of $K_\Omega\setminus Z_\Omega$ is the endpoint pair of a genuine full chord of $\Omega$.  These are the only endpoint pairs used in the construction.

\bigskip
\noindent
For $z=(s,t)\in K_\Omega$, let
\[
        w(z)=|\Gamma(s)-\Gamma(t)|.
\]
This is the length of the corresponding full chord.  By Crofton's formula,
\begin{equation}\label{eq:mean-length}
        \int_{K_\Omega} w(z)\,d\mu_\Omega(z)
        =
        \frac{\pi|\Omega|}{\Lambda_\Omega}.
\end{equation}

\begin{lemma}[Endpoint crossing sets]\label{lem:endpoint-crossing}
Let $\ell$ be a line meeting the interior of $\Omega$, and let $I_\ell\subset\T$ be one of the two boundary arcs cut off by the endpoints of $\ell\cap\Omega$.  A full chord with endpoint pair $(s,t)$ intersects $\ell\cap\Omega$ if and only if exactly one of $s,t$ lies in $I_\ell$.  Thus the crossing set in endpoint-pair space is
\[
        A_{I_\ell}=(I_\ell\times I_\ell^c)\cup(I_\ell^c\times I_\ell).
\]
Moreover,
\begin{equation}\label{eq:mu-crossing}
        \mu_\Omega(A_{I_\ell})
        =
        \frac{2\cH^1(\ell\cap\Omega)}{\Lambda_\Omega}.
\end{equation}
\end{lemma}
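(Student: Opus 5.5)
We first prove the combinatorial statement and then compute the measure. Write $\ell\cap\Omega=[p,q]$ with $p=\Gamma(a)$, $q=\Gamma(b)$, and let $I_\ell$ be the open arc from $a$ to $b$ (endpoints have measure zero). The segment $[p,q]$ splits $\Omega$ into two convex pieces $\Omega_1,\Omega_2$, and the closure of $I_\ell$, resp.\ of $I_\ell^c$, is the boundary part of $\Omega_1$, resp.\ $\Omega_2$, lying off the chord.

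Take a full chord $[\Gamma(s),\Gamma(t)]$ in general position: transverse to $\ell$, and with $s,t\notin\{a,b\}$. If $s,t$ both lie in $I_\ell$, then both endpoints lie in the closed half-plane bounded by $\ell$ that contains $\Omega_1$. The segment is then either contained in that half-plane or lies on $\ell$, and the second case is excluded by general position. Hence it meets $\ell$ at most at an endpoint, which is not in the open segment $(p,q)$. The case $s,t\in I_\ell^c$ is the same.

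If exactly one endpoint lies in $I_\ell$, the two endpoints lie strictly on opposite sides of $\ell$. Strictness holds because the only boundary points on $\ell$ are $p,q$, assuming $\ell$ meets the interior. So the segment crosses $\ell$ at a point of $\Omega$, by convexity, and hence at a point of $\ell\cap\Omega$. This gives the characterization, and so the crossing set is $A_{I_\ell}$ up to the $\mu_\Omega$-null set $Z_\Omega$ together with the null set of lines through $p$ or $q$.

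For \eqref{eq:mu-crossing}, unwind the definition of $\mu_\Omega$:
\[
\mu_\Omega(A_{I_\ell})=\frac1{\Lambda_\Omega}\,\lambda\{g: g\cap\Omega\neq\varnothing,\ E(g)\in A_{I_\ell}\}.
\]
By the first part, up to a $\lambda$-null set this equals $\Lambda_\Omega^{-1}\lambda\{g: g\cap(\ell\cap\Omega)\neq\varnothing\}$. A line hitting the segment $\ell\cap\Omega\subset\Omega$ automatically meets $\Omega$. The normalization $\lambda\{g:g\cap J\ne\varnothing\}=2h$, applied to the segment $J=\ell\cap\Omega$ of length $h=\cH^1(\ell\cap\Omega)$, then gives the formula. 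The result is symmetric in the ordering of $E(g)$, since $A_{I_\ell}$ is swap-invariant, and it does not depend on which of the two arcs is called $I_\ell$, since $A_{I}=A_{I^c}$.

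The main obstacle is the null-set bookkeeping. We must check that the exceptional lines have $\lambda$-measure zero: lines through $p$ or $q$, lines parallel to $\ell$ or equal to it, and lines whose endpoints fall in $Z_\Omega$. Each of these is a family of lines through a fixed point, or a family with a fixed direction, or is covered by $\mu_\Omega(Z_\Omega)=0$, so each is $\lambda$-null. This makes the equality of sets hold $\lambda$-a.e., which suffices for the measure identity.
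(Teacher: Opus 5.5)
Your proposal is correct and follows the paper's proof: the crossing characterization is the endpoint-alternation (separation) property for chords of a convex body, and the formula for $\mu_\Omega(A_{I_\ell})$ comes from Crofton's normalization applied to the segment $\ell\cap\Omega$. Your half-plane argument and the null-set bookkeeping simply make explicit the details that the paper's short proof leaves implicit.
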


\begin{proof}
The first assertion is the planar separation property for chords of a convex body: two chords intersect if and only if their endpoints alternate on the boundary. The formula \eqref{eq:mu-crossing} follows because the numerator is the line measure of all lines $g$ whose chord $g\cap\Omega$ intersects the segment $\ell\cap\Omega$. By Crofton's formula for a segment of length $h$,
\[
        \lambda\{g:g\cap(\ell\cap\Omega)\ne\varnothing\}=2h.
\]
Dividing by $\Lambda_\Omega$ proves the claim.
\end{proof}

\begin{lemma}[Bounded variation of chord length]\label{lem:chord-length-bv}
If $\partial\Omega$ is finite piecewise $C^2$, then the chord-length function
\[
        w(s,t)=|\Gamma(s)-\Gamma(t)|
\]
has finite Hardy--Krause variation on $\T^2$, after cutting $\T^2$ into finitely many coordinate rectangles.  In particular, in any fixed coordinate square representation of $\T^2$,
\[
        \norm{w}_{\BVHK}\le C_\Omega.
\]
\end{lemma}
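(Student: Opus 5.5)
The plan is to compute the four terms of $\norm{w}_{\BVHK}$ directly on each cell of a finite coordinate-rectangle decomposition of $[0,1)^2$, and then sum over the cells.

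\textbf{The decomposition.} Let $0=p_0<p_1<\dots<p_m<1$ be the parameters where $\Gamma$ fails to be $C^2$, together with the coordinate cut at $0$. Cut $[0,1)^2$ into the product rectangles $[p_j,p_{j+1}]\times[p_k,p_{k+1}]$. On each such rectangle $\Gamma(s)$ and $\Gamma(t)$ are $C^2$ in their own variables, with $|\Gamma'|$ equal to the fixed rescaling constant $\mathcal P$ and $|\Gamma''|\le C_\Omega$.

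\textbf{Off-diagonal rectangles ($j\ne k$).} Write $D(s,t)=\Gamma(s)-\Gamma(t)$, so that $w=|D|$. Away from the diagonal we have $|D|>0$, and $|D|$ is a $C^2$ function of $(s,t)$. Its derivatives are
\[
        \partial_s w=\frac{D\cdot\Gamma'(s)}{|D|},\qquad
        \partial_t w=-\frac{D\cdot\Gamma'(t)}{|D|},
\]
both bounded by $\mathcal P$. The mixed derivative is
\[
        \partial_{st}w=-\frac{\Gamma'(s)\cdot\Gamma'(t)}{|D|}+\frac{(D\cdot\Gamma'(s))(D\cdot\Gamma'(t))}{|D|^3}
        =-\frac{(\Gamma'(s)^{\perp_D})\cdot(\Gamma'(t)^{\perp_D})}{|D|},
\]
where $v^{\perp_D}$ denotes the component of $v$ orthogonal to $D$. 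Consequently $|\partial_{st}w|\le \mathcal P^2/|D|$.

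\textbf{The main obstacle: integrability of $1/|D|$.} One must show that $1/|D|$ is integrable on each cell, including the diagonal cells ($j=k$) and cells touching the diagonal at a corner, such as $s$ near $p_{j+1}$ and $t$ near $p_{j+1}$. The plan is to use convexity to get the lower bound
\[
        |\Gamma(s)-\Gamma(t)|\ge c_\Omega\,\operatorname{dist}_{\T}(s,t).
\]
This holds because a convex curve is bi-Lipschitz to its arclength parameter: the chord-to-arc ratio is bounded below on a compact convex body with nonempty interior. Given this bound, $1/|D|\lesssim 1/|s-t|$. That function is not integrable in two dimensions near the diagonal line, since $\int\!\int |s-t|^{-1}\,ds\,dt$ diverges logarithmically. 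So the crude bound fails, and the orthogonal-component structure has to be used.

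\textbf{Refined bound near the diagonal.} Near the diagonal within a single $C^2$ piece, $D\approx\Gamma'(t)(s-t)$. Hence $\Gamma'(s)^{\perp_D}$ and $\Gamma'(t)^{\perp_D}$ are each $O(|s-t|)$ by the $C^2$ bound, which gives
\[
        |\partial_{st}w|\lesssim \frac{|s-t|^2}{|s-t|}=|s-t|,
\]
a bounded quantity. Across a corner $p_{j+1}$ with $s<p_{j+1}<t$, one uses instead that the two tangent directions differ by a fixed angle. Then $|D|\gtrsim |s-p_{j+1}|+|t-p_{j+1}|$, and
\[
        |\partial_{st}w|\lesssim \frac{1}{|s-p|+|t-p|},
\]
which is integrable in two dimensions. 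A point interior to one piece but close to another piece's region corresponds to a chord between separated boundary points, where $|D|$ is bounded below. Flat boundary segments need no separate treatment, since there $\Gamma'$ is parallel to $D$ and $\partial_{st}w=0$.

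\textbf{Assembling the bound.} On each cell, the one-dimensional boundary terms are bounded by $\mathcal P$ times the cell side lengths. The point term is at most $\diam\Omega$. The mixed term is bounded as above. The function $w$ is continuous across the cut lines, and Hardy--Krause variation is subadditive over a rectangular grid; the grid's boundary terms are bounded by the same one-variable estimates. Summing over the finitely many cells therefore gives $\norm{w}_{\BVHK}\le C_\Omega$. The distributional interpretation covers the points where $w$ is only Lipschitz, namely the diagonal and the corners.
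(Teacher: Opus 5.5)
\textbf{Gap: the diagonal singular part of $\partial_{st}w$ is missing.} Your pointwise analysis is sound. This covers the identity $\partial_{st}w=-\Gamma'(s)^{\perp_D}\cdot\Gamma'(t)^{\perp_D}/|D|$ and the $O(|s-t|)$ bound inside a $C^2$ piece. It also covers the integrable bound $\lesssim 1/(|s-p|+|t-p|)$ in cells that touch the diagonal only at a corner. That last bound already follows from your chord--arc inequality, so it also covers $C^1$ junctions where the one-sided tangents agree and there is no angle to use.

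The problem is that all of this controls only the part of $\partial_{st}w$ that lives off the diagonal. Along $\{s=t\}$ the function $w$ has a kink. Inside a $C^2$ piece, $\partial_s w=\mathcal P\operatorname{sgn}(s-t)+O(|s-t|)$, so $\partial_s w$ jumps by $2\mathcal P$ across the diagonal. The distributional mixed derivative, which is what $\norm{w}_{\BVHK}$ measures, therefore contains the singular measure $-2\mathcal P\,\delta(s-t)\,ds\,dt$ on the diagonal. No computation off the diagonal sees it.

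Your remark that flat pieces need no treatment because $\partial_{st}w=0$ there is false for the relevant quantity. On a flat piece $w=\mathcal P|s-t|$ exactly, and the mixed difference of $|s-t|$ over $[a,b]^2$ is $-2(b-a)$. So its Vitali variation is $2(b-a)$, not $0$. As written, your argument would assign $|s-t|$ zero mixed variation. The closing appeal to ``the distributional interpretation'' is exactly the step that needed an argument.

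\textbf{Repair.} The fix is short. The paper handles this point explicitly: it subtracts the model $|s-t|$, whose mixed derivative is a finite signed measure on the diagonal, and checks that the remainder has integrable mixed derivative. In your framework, take a rectangle $R=[a,b]\times[c,d]$ inside a diagonal cell. Integrate $\partial_t w(\cdot,t)$ in $s$ across its jump of $-2\mathcal P$ at $s=t$ to get
\[
        w(b,d)-w(a,d)-w(b,c)+w(a,c)
        =\int_R\partial_{st}w\,ds\,dt-2\mathcal P\,\bigl|(a,b)\cap(c,d)\bigr|,
\]
where $\partial_{st}w$ is your a.e.\ derivative. Hence the Vitali variation of $w$ on $[p_j,p_{j+1}]^2$ is at most $\int|\partial_{st}w|+2\mathcal P(p_{j+1}-p_j)$. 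Summing over the diagonal cells adds at most $2\mathcal P$ to your bound. With this term included, the rest of your assembly goes through: additivity over the grid, and Lipschitz control of the face terms and the point term.
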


\begin{proof}
Cut $\T$ at the finitely many nonsmooth boundary parameters and at the coordinate cut.  On each open product of two resulting intervals, away from pairs representing the same boundary point, the function $w$ is $C^2$ and its derivatives are bounded by constants depending only on $\Omega$ and the distance to that coincidence set.

\bigskip
\noindent
It remains to check the local models near coincidences.  If $s$ and $t$ lie on the same smooth $C^2$ boundary arc, arclength parametrization gives
\[
        \Gamma(s)-\Gamma(t)=(s-t)\tau(t)+O_\Omega(|s-t|^2),
\]
where $\tau$ is the unit tangent.  Hence
\[
        w(s,t)=|s-t|+O_\Omega(|s-t|^2).
\]
The mixed derivative of $|s-t|$ is a finite signed measure supported on the diagonal, and the error term has integrable mixed derivative.  The same argument treats the periodic diagonal created by the coordinate cut.

\bigskip
\noindent
At a corner or junction parameter, use one-sided arclength coordinates $u,v\ge0$ along the two adjacent boundary pieces.  The model singularity is
\[
        q(u,v)=|u\tau_+ - v\tau_-|,
\]
where $\tau_+$ and $\tau_-$ are the one-sided unit tangents.  If the tangents are distinct, convexity gives a positive angle between the two rays, and
\[
        |\partial_{uv}q(u,v)|\le \frac{C}{\sqrt{u^2+v^2}}
\]
away from the origin, which is integrable on small squares.  If the tangents agree, this reduces to the smooth-diagonal model above.  The $C^2$ error terms on the two adjacent pieces again have integrable mixed derivative.  Flat pieces cause no additional difficulty, since on a flat interval the diagonal model is exactly $|s-t|$.

\bigskip
\noindent
There are only finitely many cuts and junctions.  Across them the traces of $w$ have bounded one-dimensional variation, so the distributional mixed derivative acquires only finitely many finite jump measures.  Thus the two-dimensional mixed variation and the one-dimensional face variations are finite, with total bounded by a constant depending only on $\Omega$.
\end{proof}

\section{The Polylogarithmic Upper Bound}

We now prove Theorem~\ref{thm:upper}.

\begin{proposition}[Full-chord construction with prescribed number of chords]\label{prop:N-upper}
For every $N\ge2$ there is a union $S_N$ of $N$ full chords of $\Omega$ such that
\[
        \disc_\Omega(S_N)
        \le C_\Omega(\log N)^{3/2}
\]
and
\[
        \left|L(S_N)-N\cdot\frac{\pi|\Omega|}{\Lambda_\Omega}\right|
        \le C_\Omega(\log N)^{3/2}.
\]
\end{proposition}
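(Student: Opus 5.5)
We apply Theorem~\ref{thm:ABN} to the probability measure $\mu_\Omega$ on $\T^2\cong[0,1)^2$ with exceptional set $Z_\Omega$. This gives distinct points $z_1,\dots,z_N\in K_\Omega\setminus Z_\Omega$ whose rectangle discrepancy against $N\mu_\Omega$ is at most $C(\log N)^{3/2}$ for all axis-parallel rectangles. We then set $S_N$ to be the union of the $N$ full chords whose endpoint pairs are the $z_i$. Each such chord is genuine and in general position, and distinct endpoint pairs give distinct chords.

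\textbf{Length estimate.} We have $L(S_N)=\sum_i w(z_i)$, up to overlaps. Since two distinct full chords meet in at most one point, overlaps have $\cH^1$-measure zero and can be ignored. By \eqref{eq:mean-length}, $N\int w\,d\mu_\Omega = N\pi|\Omega|/\Lambda_\Omega$. Lemma~\ref{lem:KH} combined with Lemma~\ref{lem:chord-length-bv} then bounds the difference by $\norm{w}_{\BVHK}\cdot C(\log N)^{3/2}\le C_\Omega(\log N)^{3/2}$. Lemma~\ref{lem:chord-length-bv} is stated after cutting into finitely many coordinate rectangles. We therefore apply Lemma~\ref{lem:KH} on each piece, rescaled to a unit square. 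Within a piece, rectangles are still rectangles of $[0,1)^2$, so the discrepancy hypothesis persists. The finitely many pieces cost only a constant factor.

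\textbf{Discrepancy estimate.} Fix a test line $\ell$ meeting the interior of $\Omega$ in general position, which covers almost every relevant line. For the others, $\#(\ell\cap S)=0$ and $\cH^1(\ell\cap\Omega)=0$. By Lemma~\ref{lem:endpoint-crossing}, $\#(\ell\cap S_N)=\#\{i: z_i\in A_{I_\ell}\}$. The set $I_\ell$ is an arc of $\T$, hence in $[0,1)$ it is an interval or the complement of one. So $A_{I_\ell}$, a union of $I\times I^c$ and $I^c\times I$, is a disjoint union of at most a bounded number (say $8$) of axis-parallel rectangles. Hence
\[
\Bigl|\#(\ell\cap S_N)-N\mu_\Omega(A_{I_\ell})\Bigr|\le C(\log N)^{3/2}.
\]
By \eqref{eq:mu-crossing}, $N\mu_\Omega(A_{I_\ell})=2N\cH^1(\ell\cap\Omega)/\Lambda_\Omega$. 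The Crofton target, however, is $\frac{2L(S_N)}{\pi|\Omega|}\cH^1(\ell\cap\Omega)$. These agree when $L(S_N)=N\pi|\Omega|/\Lambda_\Omega$. By the length estimate, the difference is at most
\[
\frac{2}{\pi|\Omega|}\,\diam(\Omega)\cdot C_\Omega(\log N)^{3/2},
\]
which is absorbed into the constant. Taking the essential supremum over $\ell$ gives the discrepancy bound.

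\textbf{Main obstacle.} The delicate step is the length estimate: it needs Koksma--Hlawka with the nonsmooth chord-length function $w$ on the torus, which is handled by Lemma~\ref{lem:chord-length-bv} together with the piecewise application of Lemma~\ref{lem:KH}. The rest is bookkeeping. We also need to check that the coordinate cut of $\T$ does not break the rectangle decomposition of $A_{I_\ell}$, which the complement-of-interval case handles.
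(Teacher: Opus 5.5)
Your proposal is correct and follows the paper's proof essentially step for step. You apply Theorem~\ref{thm:ABN} to $\mu_\Omega$ with exceptional set $Z_\Omega$, and you express crossing counts through Lemma~\ref{lem:endpoint-crossing} as counts in $O(1)$ rectangles. You obtain the length estimate from Lemma~\ref{lem:KH} together with Lemma~\ref{lem:chord-length-bv}, and you absorb the normalization mismatch using $\cH^1(\ell\cap\Omega)\le\diam(\Omega)$. Your piecewise application of Koksma--Hlawka on the cut rectangles is a slightly more explicit version of what the paper does in one step with the global Hardy--Krause bound.
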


\begin{proof}
Apply Theorem~\ref{thm:ABN} to the probability measure $\mu_\Omega$ on endpoint-pair space, with $Z=Z_\Omega$.  We obtain $N$ distinct genuine endpoint pairs
\[
        z_i=(s_i,t_i)\in K_\Omega\setminus Z_\Omega
\]
such that every axis-parallel rectangle $R\subset\T^2$ satisfies
\begin{equation}\label{eq:rect-disc-upper}
        \left|
        \#\{i:z_i\in R\}-N\mu_\Omega(R)
        \right|
        \le C(\log N)^{3/2}.
\end{equation}
Here intervals on the circle are cut into at most two intervals in $[0,1)$, changing only the absolute constant.

\bigskip
\noindent
Let $S_N$ be the union of the full chords determined by the endpoint pairs $z_i$.  For a test line $\ell$, Lemma~\ref{lem:endpoint-crossing} gives
\[
        \#(\ell\cap S_N)
        =
        \#\{i:z_i\in A_{I_\ell}\}
\]
for all nonexceptional $\ell$.  Since $A_{I_\ell}$ is a union of $O(1)$ axis-parallel rectangles in $\T^2$, \eqref{eq:rect-disc-upper} gives
\[
        \#(\ell\cap S_N)
        =N\mu_\Omega(A_{I_\ell})+O((\log N)^{3/2}).
\]
By \eqref{eq:mu-crossing},
\[
        N\mu_\Omega(A_{I_\ell})
        =
        \frac{2N}{\Lambda_\Omega}\cdot\cH^1(\ell\cap\Omega).
\]

\smallskip
\noindent
It remains to compare $2N/\Lambda_\Omega$ with the normalization using the actual length.  By Lemma~\ref{lem:KH}, Lemma~\ref{lem:chord-length-bv}, and \eqref{eq:rect-disc-upper},
\[
        L(S_N)=\sum_{i=1}^N w(z_i)
        =N\int w\,d\mu_\Omega+O_\Omega((\log N)^{3/2}).
\]
Using \eqref{eq:mean-length}, this is the asserted length estimate.  Therefore, uniformly in $\ell$,
\[
\begin{aligned}
\left|
\frac{2N}{\Lambda_\Omega}\cdot\cH^1(\ell\cap\Omega)
-
\frac{2L(S_N)}{\pi|\Omega|}\cdot\cH^1(\ell\cap\Omega)
\right|
&\le C_\Omega(\log N)^{3/2},
\end{aligned}
\]
since $\cH^1(\ell\cap\Omega)\le\diam(\Omega)$.  Combining the estimates proves the discrepancy bound.
\end{proof}

\begin{lemma}[Exact length correction]\label{lem:exact-length-correction}
There is a constant $m_\Omega<\infty$ with the following property.  Given any finite collection of full chords and any $\Delta\ge0$, one can choose a finite union $E$ of full chords, sharing no nontrivial segment with the prescribed chords or with each other, such that
\[
        L(E)=\Delta
\]
and $E$ contains at most $m_\Omega(\Delta+1)$ chords.
\end{lemma}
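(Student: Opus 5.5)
The plan is to build $E$ out of chords whose lengths can be varied continuously, using a greedy decomposition together with an intermediate-value argument for the last chord. Set $D:=\diam(\Omega)>0$. Since $\Omega$ has nonempty interior, fix an interior point $p$ and a radius $r_0>0$ with $B(p,r_0)\subset\Omega$. Every line through $B(p,r_0/2)$ cuts a chord of length at least $r_0$, and every chord has length at most $D$. Full chords vary continuously under rotation and translation of the line, so chord length depends continuously on the line parameters, by convexity away from supporting lines. Moving a line from near a supporting line into the middle of $\Omega$ therefore produces every length in $(0,\ell_{\max}]$ for some $\ell_{\max}\ge r_0$, for example $\ell_{\max}=D$ along a diameter direction. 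Within each such one-parameter family we have a continuum of choices.

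\textbf{Step 1 (decomposition of $\Delta$).} If $\Delta=0$, take $E=\varnothing$. Otherwise write $\Delta=k\,a+b$ with $a:=r_0$, $k=\lfloor\Delta/a\rfloor$ and $0\le b<a$. When $b=0$, simply use $k$ chords of length $a$. The resulting count is at most $k+1\le \Delta/r_0+1$, so $m_\Omega:=\max(1,1/r_0)+1$ works.

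\textbf{Step 2 (realizing each length while avoiding overlaps).} A length $a$ (or $b$) is realized by a one-parameter family of chords: fix a direction $\theta$ and translate the line orthogonally. The chord length is a continuous function of the offset, tending to $0$ at the supporting lines and at least $r_0$ near $p$. By the intermediate value theorem each target length in $(0,r_0]$ is attained at some offset, for every direction $\theta$. Two distinct lines share at most one point, so two chords share a nontrivial segment only if they lie on the same line. It therefore suffices to use pairwise distinct lines that also differ from the finitely many prescribed lines. This is arranged by choosing the directions $\theta_1,\dots,\theta_{k+1}$ pairwise distinct and distinct from the directions of the prescribed chords; only finitely many directions are forbidden, so this is always possible.

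\textbf{Step 3 (exact total).} Summing gives $L(E)=k a+b=\Delta$ exactly, because $L$ is $\cH^1$ of the union. Distinct lines intersect in single points, so the union has length equal to the sum of the chord lengths; overlaps with the prescribed chords are likewise points and do not matter for $E$ itself.

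The main obstacle is the intermediate-value step, namely continuity of chord length in the offset. It follows from convexity: the length function $h\mapsto\cH^1(\{x:\langle x,\theta^\perp\rangle=h\}\cap\Omega)$ is concave on its support, because a convex body's section lengths form a concave function by Brunn--Minkowski in one dimension. It is therefore continuous in the interior, and near the endpoints it tends to a limit that is at most the length of a boundary segment. If a boundary segment makes the endpoint limit positive, we still attain every value in $[\text{endpoint limit},\,r_0]$. To cover small targets $b$, we choose $\theta$ not parallel to any of the finitely many flat boundary pieces; then the endpoint limit is $0$ and every $b\in(0,r_0]$ is attained.
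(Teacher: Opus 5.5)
Your proof is correct and follows the paper's route. You write $\Delta=k\,d+b$, realize each chord length by the intermediate value theorem in the offset along pairwise distinct directions that avoid both the prescribed and the flat directions, and add lengths because distinct lines meet in at most one point. The only difference is that you get a uniform $d=r_0$, valid for every non-flat direction, from an inscribed ball and Brunn concavity, whereas the paper uses continuity in the direction to get an interval $W$ and $d_\Omega$.

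One small fix: you must impose the non-flat-direction requirement on every chord, not only the remainder chord. The claim in Step 2 that each length in $(0,r_0]$ is attained ``for every direction $\theta$'' is false, e.g., in a long thin rectangle with $\theta$ parallel to the long side, where every chord is longer than $r_0$.
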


\begin{proof}
Choose a direction $v$ which is not parallel to any nondegenerate line segment in $\partial\Omega$; this is possible because there are only finitely many flat boundary pieces.  For a nearby direction $u$, let $\ell_u(a)$ be the length of the chord cut out by the line with direction $u$ and perpendicular offset $a$.  Since $u$ is not parallel to a flat boundary piece, the exposed faces at the two extreme offsets are points, and $\ell_u(a)$ extends continuously to the offset interval with endpoint values $0$.  It attains a positive maximum.

\bigskip
\noindent
By continuity in the direction, there is an open interval $W$ of directions, avoiding the finitely many flat directions, and a number $d_\Omega>0$ such that every $u\in W$ has a chord of each length in $[0,d_\Omega]$.  Given $\Delta$, write
\[
        \Delta=nd_\Omega+r,
        \qquad
        n\in\Z_{\ge0},
        \qquad
        0\le r<d_\Omega.
\]
Choose $n$ distinct directions in $W$ and take one chord of length $d_\Omega$ in each of those directions.  If $r>0$, choose one further direction in $W$ and take a chord of length $r$.  The directions may be chosen generically so that the new chords share no nontrivial segment with each other or with the prescribed finite collection.  Thus lengths add, and the number of added chords is at most $\Delta/d_\Omega+1$.
\end{proof}

\begin{proof}[Proof of Theorem~\ref{thm:upper}]
Let
\[
        \bar w_\Omega:=\frac{\pi|\Omega|}{\Lambda_\Omega},
        \qquad
        \Phi(L):=(\log L)^{3/2}.
\]
By Proposition~\ref{prop:N-upper}, after increasing a constant if necessary, there is a constant $C_0=C_0(\Omega)$ such that, for all large $N$,
\[
        \disc_\Omega(S_N)\le C_0(\log N)^{3/2},
        \qquad
        \left|L(S_N)-N\bar w_\Omega\right|\le C_0(\log N)^{3/2}.
\]
Choose
\[
        N=\left\lfloor \frac{L-K\Phi(L)}{\bar w_\Omega}\right\rfloor
\]
with $K=K_\Omega>4C_0$ fixed. Then $N\asymp_\Omega L$, so $(\log N)^{3/2}\le 2\Phi(L)$ for all large $L$. Hence
\[
        \disc_\Omega(S_N)\le 2C_0\Phi(L),
        \qquad
        \left|L(S_N)-N\bar w_\Omega\right|\le 2C_0\Phi(L).
\]
By the choice of $N$,
\[
        L-K\Phi(L)-\bar w_\Omega\le N\bar w_\Omega\le L-K\Phi(L).
\]
Therefore
\[
        0\le L-L(S_N)\le (K+2C_0+1)\Phi(L)
\]
for all sufficiently large $L$, after using $\bar w_\Omega\le \Phi(L)$. Put $\Delta:=L-L(S_N)$.  Apply Lemma~\ref{lem:exact-length-correction} to the existing chords in $S_N$ and obtain a finite union $E$ of at most $m_\Omega(\Delta+1)$ full chords with $L(E)=\Delta$, chosen so that no new chord shares a nontrivial segment with another chord.  Set
\[
        S:=S_N\cup E.
\]
Then $L(S)=L$.

\bigskip
\noindent
Each added chord changes every nonexceptional line-intersection count by at most one. Also, changing the length by $\Delta$ changes the Crofton target by at most
\[
        \frac{2\Delta}{\pi|\Omega|}\cdot\diam(\Omega).
\]
Thus for sufficiently large $C_\Omega > 0$
\[
\begin{aligned}
        \disc_\Omega(S)
        &\le \disc_\Omega(S_N)+m_\Omega(\Delta+1)+\frac{2\diam(\Omega)}{\pi|\Omega|}\cdot\Delta \\
        &\le C_\Omega\Phi(L).
\end{aligned}
\]
This proves
\[
        \disc_\Omega(S)\le C_\Omega(\log L)^{3/2}.
\]
\end{proof}

\section{The Disk Lower Bound}

We now prove Theorem~\ref{thm:lower}.  The proof uses the special fact that for the disk, endpoint pairs are simply pairs of boundary angles.

\bigskip
\noindent
Let $B=B(0,1)$ and parametrize $\partial B$ by $e^{ix}$, $x\in\T=\R/2\pi\Z$.  A full chord is represented by an unordered pair $\{x,y\}$.  All arcs in this section are taken half-open, after choosing a coordinate interval in $\T$.

\bigskip
\noindent
Fix a full-chord union $S$ and let $F\subset\T$ be the finite set of boundary endpoints of chords in $S$.  An arc is called admissible if its two endpoints do not lie in $F$.  If $I\subset\T$ is admissible, let $\ell_I$ be the chord joining the endpoints of $I$.  Then $\ell_I$ is nonexceptional for $S$, and a full chord $\{x,y\}$ crosses $\ell_I$ iff exactly one of $x,y$ lies in $I$.

\bigskip
\noindent
The essential supremum in the definition of discrepancy controls every nonexceptional line.  Indeed, near a nonexceptional line the intersection count is locally constant, while the Crofton target varies continuously; if the discrepancy at such a line were larger than the essential supremum, the same would hold on a positive-measure neighborhood.

\bigskip
\noindent
For an admissible arc $I$, define
\[
        C(I):=\#\{\text{chords of }S\text{ with exactly one endpoint in }I\}.
\]
If $D=\disc_B(S)$ and $L=L(S)$, then
\begin{equation}\label{eq:cut-disc}
        \left|C(I)-\frac{4L}{\pi^2}\cdot\sin\frac{|I|}{2}\right|
        \le D
\end{equation}
for every admissible arc $I$.  Indeed, the test chord $\ell_I\cap B$ has length $2\sin(|I|/2)$ and $|B|=\pi$.

\bigskip
\noindent
Fix two disjoint boundary arcs $U,V\subset\T$ of small positive length, separated by a fixed positive distance.  For instance, one may take
\[
        U=[0,\eta),
        \qquad
        V=[\pi/3,\pi/3+\eta)
\]
with $\eta>0$ sufficiently small.  If $I\subset U$ and $J\subset V$ are admissible subarcs, let
\[
        N(I,J)
\]
be the number of chords of $S$ with one endpoint in $I$ and the other in $J$.

\begin{lemma}[Rectangle counts from cut counts]\label{lem:rectangle-from-cuts}
For all admissible subarcs $I\subset U$ and $J\subset V$,
\[
        \left|
        N(I,J)-
        \int_I\int_J \frac{L}{2\pi^2}\cdot\sin\left(\frac{y-x}{2}\right)\,dy\,dx
        \right|
        \le 2D.
\]
\end{lemma}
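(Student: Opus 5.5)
The plan is to write $N(I,J)$ as an exact signed combination of four cut counts $C(\cdot)$ of admissible arcs. We then apply \eqref{eq:cut-disc} to each of them and check that the Crofton main terms combine into the stated double integral.

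\textbf{Setup.} Write $I=[a,b)\subset U$ and $J=[c,d)\subset V$, in the coordinate interval where $0\le a<b<c<d<2\pi$; this is possible because $U$ and $V$ are disjoint and separated. Introduce the four arcs
\[
        A=[a,c),\qquad B'=[b,d),\qquad G=[b,c),\qquad T=[a,d).
\]
Their endpoints lie in $\{a,b,c,d\}$, which is disjoint from $F$ because $I$ and $J$ are admissible. Hence all four arcs are admissible and \eqref{eq:cut-disc} applies to each.

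\textbf{Combinatorial identity.} For a chord $\{x,y\}$ and an arc $X$, the indicator that exactly one endpoint lies in $X$ is
\[
1_X(x)+1_X(y)-2\cdot 1_X(x)1_X(y).
\]
Consider the combination $A+B'-G-T$.
\begin{itemize}[leftmargin=*]
\item The linear terms cancel, because $1_A+1_{B'}-1_G-1_T=(1_I+1_G)+(1_G+1_J)-1_G-(1_I+1_G+1_J)=0$.
\item For the quadratic terms, expand $1_{X}(x)1_X(y)$ with $A=I\sqcup G$, $B'=G\sqcup J$ and $T=I\sqcup G\sqcup J$. The $I\otimes I$, $G\otimes G$, $J\otimes J$, $I\otimes G$ and $G\otimes J$ parts all cancel. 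What remains is $-(1_I(x)1_J(y)+1_J(x)1_I(y))$.
\end{itemize}
Multiplying by $-2$ and summing over the chords of $S$ gives the identity
\[
        N(I,J)=\tfrac12\bigl(C(A)+C(B')-C(G)-C(T)\bigr).
\]
Chords with an endpoint in $F$ cause no trouble, because all arc endpoints avoid $F$. By \eqref{eq:cut-disc}, $N(I,J)$ then differs from
\[
M:=\frac{2L}{\pi^2}\Bigl[\sin\tfrac{c-a}2+\sin\tfrac{d-b}2-\sin\tfrac{c-b}2-\sin\tfrac{d-a}2\Bigr]
\]
by at most $\tfrac12\cdot 4D=2D$.

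\textbf{Main-term computation.} It remains to show that $M$ equals the double integral in the statement. Put $f(u)=-4\sin(u/2)$, so that $f''(u)=\sin(u/2)$ and $\partial_x\partial_y f(y-x)=-\sin((y-x)/2)$. The fundamental theorem of calculus in both variables then gives
\[
\int_a^b\!\!\int_c^d \sin\tfrac{y-x}{2}\,dy\,dx
=4\Bigl[\sin\tfrac{d-b}2-\sin\tfrac{c-b}2-\sin\tfrac{d-a}2+\sin\tfrac{c-a}2\Bigr].
\]
Multiplying by $L/(2\pi^2)$ yields exactly $M$. All arc lengths involved lie in $(0,2\pi)$, so the formula $2\sin(|X|/2)$ for chord lengths is valid without absolute values.

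\textbf{Main obstacle.} The only delicate step is finding and verifying the four-arc identity. Once it holds, the error bound and the integral evaluation are routine.
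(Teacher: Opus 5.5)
Your proposal is correct and matches the paper's proof step for step. It uses the same four admissible arcs $I\cup G$, $G\cup J$, $G$, $I\cup G\cup J$, the same identity $N(I,J)=\frac12\bigl(C(I\cup G)+C(G\cup J)-C(G)-C(I\cup G\cup J)\bigr)$ with error $\frac12\cdot 4D=2D$, and the same evaluation of the main term by the two-variable fundamental theorem of calculus; your indicator-algebra check and the antiderivative $f(u)=-4\sin(u/2)$ simply spell out the paper's ``direct check'' and its observation $\partial_{xy}\sin\frac{y-x}{2}=\frac14\sin\frac{y-x}{2}$.
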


\begin{proof}
Write $I=[a,b)$ and $J=[c,d)$ in cyclic order $a<b<c<d$ after choosing a coordinate interval containing $U$ and $V$.  Let $G=[b,c)$ be the gap between $I$ and $J$.  Since $a,b,c,d\notin F$, the four arcs below are admissible.  For a chord with endpoints in the four regions $I,G,J,$ and the complement, a direct check gives the identity
\begin{equation}\label{eq:cut-identity}
        N(I,J)=\frac12\bigl(C(I\cup G)+C(G\cup J)-C(G)-C(I\cup G\cup J)\bigr).
\end{equation}
Indeed, a chord with one endpoint in $I$ and one in $J$ contributes $1+1-0-0=2$ to the bracket, while every other pair of regions contributes $0$.

\bigskip
\noindent
Apply \eqref{eq:cut-disc} to the four arcs in \eqref{eq:cut-identity}.  The target contribution is
\[
\frac12\cdot\frac{4L}{\pi^2}\cdot
\left(
\sin\left(\frac{c-a}{2}\right)+\sin\left(\frac{d-b}{2}\right)-\sin\left(\frac{c-b}{2}\right)-\sin\left(\frac{d-a}{2}\right)
\right).
\]
Since
\[
        \frac{\partial^2}{\partial x\partial y}\sin\left(\frac{y-x}{2}\right)
        =\frac14\cdot\sin\left(\frac{y-x}{2}\right),
\]
this equals
\[
        \int_a^b\int_c^d \frac{L}{2\pi^2}\cdot\sin\left(\frac{y-x}{2}\right)\,dy\,dx.
\]

\smallskip
\noindent
The four cut errors enter with factor $1/2$, so their total contribution is at most $2D$.
\end{proof}

\begin{proof}[Proof of Theorem~\ref{thm:lower}]
Let $P\subset U\times V$ be the finite point set consisting of the ordered pairs $(x,y)$ corresponding to chords of $S$ with one endpoint $x\in U$ and the other endpoint $y\in V$.  Then, for half-open subarcs $I\subset U$ and $J\subset V$,
\[
        \#(P\cap(I\times J))=N(I,J).
\]
Apply Theorem~\ref{thm:weighted-schmidt} on $Q=U\times V$ to the density
\[
        \rho_0(x,y)=\frac{1}{2\pi^2}\cdot\sin\left(\frac{y-x}{2}\right)
\]
with $M=L$. Since $U$ and $V$ are separated, $\rho_0$ is $C^1$ and bounded above and below by positive constants on $Q$.  The theorem gives
\[
        \sup_{I\subset U,\ J\subset V}
        \left|
        N(I,J)-
        \int_I\int_J \frac{L}{2\pi^2}\cdot\sin\left(\frac{y-x}{2}\right)\,dy\,dx
        \right|
        \ge c\log L
\]
for all large $L$, after restricting the supremum to admissible half-open arcs.  This restriction is harmless: the endpoints in $F$ and the coordinates of points of $P$ form a finite set, the point count is locally constant under perturbations avoiding this set, and the integral target is continuous.

\smallskip
\noindent
Lemma~\ref{lem:rectangle-from-cuts} bounds the same admissible supremum by $2D$.  Hence
\[
        D\ge \frac{c}{2} \cdot \log L.
\]
This proves the theorem.
\end{proof}

\section{Further Comments}

The upper and lower bounds leave a gap between $\log L$ and $(\log L)^{3/2}$.  The exponent $3/2$ comes from applying the Aistleitner--Bilyk--Nikolov arbitrary-measure star-discrepancy theorem in dimension $2$ to the endpoint-pair measure.  For special measures and special range families one may expect sharper constructions.  In the disk, for example, the endpoint-pair measure is explicit and smooth away from the diagonal, and the relevant crossing sets are very simple unions of rectangles.  It is plausible that the upper bound can be improved to $O(\log L)$.

\bigskip
\noindent
The lower bound relies on the full chord restriction; as Steinerberger showed in \cite{SteinerbergerBuffon}, one may achieve discrepancy $O(1)$ in the disk using a non-linear construction. The natural question following these results is whether one can achieve $O(1)$ discrepancy for arbitrary convex bodies if one were to remove the full chord restriction.

\section*{Acknowledgements} The author thanks Stefan Steinerberger for introducing the Buffon discrepancy problem and Victor Reis for helpful discussion regarding improvements to the original Steinhaus longimeter construction (\cite{SteinerbergerBuffon}, \cite{KorskyRandomShifted}). The author also acknowledges GPT-5.5 for assistance in performing the detailed computations and preparing an initial draft of this preprint. The proof idea and the direction of the argument are due to the author.

\end{document}